\newtheorem{theorem}{Theorem}
\newtheorem{lemma}{Lemma}
\def \R {\mathbb R}
\def \C {\mathbb C}
\def \N {\mathbb N}
\def \S {\mathcal S}
\renewcommand {\chi}{\operatorname{1}}
\title{New criteria on positive-definite distributions}
\author{J. Haddad}
\begin{document}
\maketitle
\begin{abstract}
	We establish several sufficient conditions under which a locally-integrable function $f:\R^n \to \R$ represents a positive-definite distribution.
	In particular we consider functions of the form $f(\|x\|)$ where $\|\cdot\|$ is a fixed norm in $\R^n$.
\end{abstract}

\section{Introduction}

\subsection{Positive-definite functions}
A continuous function $f:\R^n \to \C$ is said to be positive-definite if it is the Fourier transform of a non-negative finite measure $\mu$ on $\R^n$, this is,
	\begin{equation}
		\label{def_fourier}
		\hat f(\xi) = \int_{\R^n} e^{-i (x, \xi) } d \mu(x).
	\end{equation}
	where $(\cdot, \cdot)$ denotes the inner product of $\R^n$.
Positive-definite functions and their generalization to the theory of distributions (see below) arise naturally in many areas of mathematical analysis.
For example, in probability theory, positive-definite functions are the characteristic functions of probability measures.
In integral equations they are the kernels of positive, translation invariant integral operators.
We refer to \cite{stewart1976positive} for a historical overview on positive-definite functions, its generalizations and its relevance to other areas of mathematics.

Equivalently, a continuous function $f$ is positive-definite if and only if for every $k \in \N, c_1, \ldots, c_k \in \C, x_1, \ldots, x_k \in \R^n$,
\begin{equation}
	\label{def_matrix}
	\sum_{i,j=1}^k c_i \overline{c_j} f(x_i - x_j) \geq 0,
\end{equation}
meaning that the matrix $\{f(x_i - x_j)\}_{i,j}$ is positive-definite.
This is usually taken as a definition of positive-definiteness, and the equivalence between the two properties is the content of Bochner's theorem (see \cite[Theorem 2, Section 3.2]{gelfandvilenkin}).

Great efforts have been made to determine whether a given function or distribution is positive-definite.
The Schoenberg problem asks for which values of $n,p,q$ is the function $f_{n,p,q}(x) = e^{-\|x\|_p^q}$ positive-definite, where $\|\cdot\|_p$ is the $\ell_p$ norm in $\R^n$.
This has important consequences in embedding problems of Banach spaces.
Schoenberg showed that a Banach space $(\R^n, \|\cdot\|)$ can be embedded in the Hilbert cube, if and only if $e^{-\|x\|^2}$ is positive-definite.
Moreover, given $0< q < 2$, the positive-definiteness of $e^{-\|x\|^q}$ is equivalent to the fact that the Banach space $(\R^n, \|\cdot\|)$ can be embedded as a subspace of $L_q([0,1])$ (see \cite[Theorem 6.6]{koldobsky2005fourier}).
Partial answers to the Schoenberg problem were obtained along the 20-th century, and the problem was finally settled by Koldobsky (see \cite{koldobsky1991schoenbergoriginal}, \cite{koldobsky1992schoenberg} and the references therein).

\subsection{Positive-definite distributions}
Let $\S$ denote the Schwarz space, consisting of $C^\infty$ functions $\varphi:\R^n \to \R$ such that for every $k \in \N$,
\[p_k(\varphi) = \sup_{|\alpha| \leq k} \sup_{x\in\R^n} \left| \frac{\partial^{|\alpha|} \varphi}{\partial x^\alpha} (x) (1+|x|)^k \right| < \infty,\]
where $|x| = \sqrt{(x, x)}$ is the Euclidean norm.
The dual $\S'$ is called the space of distributions on $\R^n$.
It consists of the linear functionals $f:\S \to \R$ which are continuous with respect to the family of seminorms $p_k$.
We denote by $\langle f, \varphi \rangle$ the action of the distribution $f \in \S'$ on the function $\varphi \in \S$.

A locally integrable function $g$ defined almost everywhere on $\R^n$ defines naturally a distribution by integration, as
\[\langle g, \varphi \rangle = \int_{\R^n} g(x) \varphi(x) dx.\]
Likewise, a locally finite measure $\mu$ also defines a distribution by the analogue formula
\[\langle \mu, \varphi \rangle = \int_{\R^n} \varphi(x) d \mu(x).\]

For $\varphi \in \S$, its Fourier transform, defined by
\[\hat \varphi(\xi) = \int_{\R^n} \varphi(x) e^{-i (x, \xi) } d x,\]
is also in $\S$. This fact allows us to define the Fourier transform of a distribution $f \in \S'$, as the distribution $\hat f$ given by
\[\langle \hat f, \varphi \rangle = \langle f, \hat \varphi \rangle.\]

A distribution $f \in \S'$ is said to be positive if $\langle f, \varphi \rangle \geq 0$ for every non-negative $\varphi \in \S$.
Bochner's theorem, generalized by Schwarz (see \cite[Theorem 2, Section 2.2]{gelfandvilenkin}) states that a distribution is positive-definite if and only if
it is the Fourier transform of the distribution defined by a non-negative tempered measure, this is, a measure $\mu$ for which 
\[\int_{\R^n} (1+|x|)^{-\beta} d\mu(x) < \infty,\]
for some number $\beta > 0$.

Of course the condition \eqref{def_matrix}, which is usually taken as the definition of positive-definiteness for functions, makes no sense for general distributions.
While positive-definite functions satisfy several convenient properties like being continuous and bounded, positive-definite distributions can be unbounded, even if they are defined by locally integrable functions.
For example, the function $|x|^{-1}$ is a positive-definite distribution but not a positive-definite function.

Positive-definite distributions played a fundamental role in the solution of the Busemann-Petty problem on sections of convex bodies \cite[Chapter 5]{koldobsky2005fourier}.
A (symmetric) convex body $K$ is the closed unit ball of a norm in $\R^n$ which we denote by $\|\cdot\|_K$.
The Busemann-Petty problem led to the study of intersection bodies, and later to its analytical characterization given by Koldobsky \cite{koldobsky2005fourier}.
The intersection body of a star body $L$ is a star body $K$ such that for every $v \in S^{n-1}$, $\|v\|_K^{-1}$ is the $(n-1)$-dimensional volume of the section of $L$ with the orthogonal space, $L \cap v^\perp$.
In general a convex body is called an {\it intersection body} if it is the limit in the Hausdorff metric of intersection bodies of convex bodies.
Koldobsky showed \cite{koldobsky2005fourier} that a symmetric convex body $K$ is an intersection body if and only if $\|\cdot\|_K^{-1}$ is a positive-definite distribution.

Eaton \cite{eaton1981projections} showed that for a random vector $X$, all linear combinations of its coordinates have the same distribution, up to a constant, if and only if its characteristic function is of the form $f(\|x\|_K)$.
Thus by Bochner's theorem, the problem of finding all random vectors with this property is equivalent to characterize the class of functions $f$ for which $f(\|x\|_K)$ is positive-definite. Koldobsky showed \cite{koldobsky2011positive} that for continuous $f:[0, \infty) \to \R$ under mild conditions, if $f(\|x\|_K)$ is positive-definite then the space $(\R^n, \|\cdot\|_K)$ embeds in $L_0$ (see \cite{koldobsky2011positive, kalton2007geometry} for details).

\subsection{Known criteria for positive-definiteness}
P\'olya's criterion states that an even function $f:\R \to \R$ which is convex on the positive real line and $\lim_{x \to \pm \infty} f(x) = 0$, is positive-definite (see \cite{polya1949remarks}).
This includes for example the function $e^{-|x|^p}$ for $p \in (0,1]$.

In fact, the function $x\in\R \mapsto e^{-|x|^p}$ is positive-definite for $p \in (0,2)$, since it can be written as the Laplace transform of a positive measure.
Moreover, if $f \in \S'$ is a distribution defined by
\[\langle f, \varphi \rangle = \int_I \langle f_\lambda, \varphi \rangle d\mu(\lambda),\]
where $f_\lambda$ is a family of positive-definite distributions, and $\mu$ is a finite measure in a set $I$, then $f$ is a positive-definite distribution.

Additionally, the sum, product and convolution of two positive-definite functions, are again positive-definite.

In \cite{koldobsky2002derivatives} Koldobsky showed the following:
\begin{theorem}[{\cite[Theorem 3]{koldobsky2002derivatives}}]
	\label{thm_subspace}
	Let $1 \leq k \leq n$ and let $f$ be an even function on $\R^n$, which is locally integrable, continuous and bounded outside of the origin. Suppose that the function $|x|^{-k} f(x)$ is locally integrable on $\R^n$. If $f|_H$ is a positive-definite distribution on $H$ for every $n-k$ dimensional subspace $H$ of $\R^n$, then $|x|^{-k} f(x)$ is a positive definite distribution on $\R^n$.
\end{theorem}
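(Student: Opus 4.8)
The plan is to prove the equivalent statement that the Fourier transform of $|x|^{-k}f$ is a positive distribution; since $f$, and hence $|x|^{-k}f$, is even, this is equivalent to positive-definiteness by the Bochner--Schwartz theorem quoted above. Thus I fix a non-negative $\varphi\in\S$ and aim to show that $\langle\widehat{|x|^{-k}f},\varphi\rangle=\langle|x|^{-k}f,\hat\varphi\rangle\ge0$. The right-hand side is an absolutely convergent integral: near the origin $|x|^{-k}|f|$ is integrable by hypothesis while $\hat\varphi$ is bounded, and away from the origin $f$ is bounded while $\hat\varphi\in\S$ decays rapidly.

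The heart of the argument is an integral-geometric identity disintegrating the pairing over the Grassmannian $G(n,n-k)$ of $(n-k)$-dimensional subspaces. For $\psi\in\S$, writing the integral in polar coordinates gives $\langle|x|^{-k}f,\psi\rangle=\int_{S^{n-1}}\int_0^\infty r^{\,n-k-1}f(r\theta)\psi(r\theta)\,dr\,d\theta$, and the radial weight $r^{\,n-k-1}$ is exactly the Jacobian of an $(n-k)$-dimensional polar integral. Using the rotation-invariant disintegration $\int_{S^{n-1}}F\,d\theta=c_{n,k}\int_{G(n,n-k)}\int_{S^{n-1}\cap H}F\,d\omega\,dH$ (valid with a positive constant $c_{n,k}$ because the right-hand side is a rotation-invariant positive functional of $F$, hence a multiple of surface measure) the inner integrals reassemble into integrals over $H$, yielding $\langle|x|^{-k}f,\psi\rangle=c_{n,k}\int_{G(n,n-k)}\langle f|_H,\psi|_H\rangle\,dH$. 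Here the local-integrability hypothesis plays a double role: via the weight $r^{\,n-k-1}$ it is exactly equivalent to $f|_H$ being locally integrable on $H$ for almost every $H$, which makes each slice a genuine (tempered, since $f$ is bounded at infinity) distribution on $H$ and makes the disintegration legitimate.

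I would then take $\psi=\hat\varphi$ and exploit the projection--slice relation: for $\xi\in H$ one has $\hat\varphi(\xi)=\widehat{P_H\varphi}(\xi)$, where $P_H\varphi(y)=\int_{H^\perp}\varphi(y+z)\,dz$ is the projection of $\varphi$ onto $H$ and the hat on the right denotes the $(n-k)$-dimensional Fourier transform within $H$. The decisive observation is that $P_H\varphi$ is a \emph{non-negative} Schwartz function on $H$ because $\varphi\ge0$. Since by hypothesis $f|_H$ is positive-definite on $H$, its Fourier transform $\widehat{f|_H}$ is a non-negative measure, so $\langle f|_H,\hat\varphi|_H\rangle=\langle f|_H,\widehat{P_H\varphi}\rangle=\langle\widehat{f|_H},P_H\varphi\rangle\ge0$ for every $H$. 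Feeding this into the disintegration formula makes the whole integral non-negative, and since $c_{n,k}>0$ we conclude $\langle|x|^{-k}f,\hat\varphi\rangle\ge0$, as desired.

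The main obstacle is analytic rather than conceptual: rigorously justifying the disintegration identity and the interchange of the distributional pairing with the integral over $G(n,n-k)$, together with the claim that the naive integral $\int_H f|_H\,\psi|_H$ computes the distributional pairing $\langle f|_H,\psi|_H\rangle$. The delicate region is the origin, where $f|_H$ need not be continuous; I expect to handle this by truncating away a small ball, applying Fubini/Tonelli on the complement (where all integrands are bounded and continuous), and passing to the limit using the local integrability of $|x|^{-k}f$ to control the error uniformly in $H$. One must also confirm at the outset that $f|_H$ is well defined as a tempered distribution for almost every $H$, which again follows from the equivalence between local integrability of $|x|^{-k}f$ on $\R^n$ and of $f|_H$ on $H$.
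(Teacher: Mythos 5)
The paper does not prove this statement: it is imported verbatim from Koldobsky's work (cited as \cite[Theorem 3]{koldobsky2002derivatives}) and used as a black box, so there is no in-paper proof to compare against. Your argument --- reduce to positivity of the Fourier transform via Bochner--Schwartz, disintegrate the polar-coordinate integral of $|x|^{-k}f\,\hat\varphi$ over the Grassmannian $G(n,n-k)$ so that the weight $r^{n-k-1}$ becomes the $(n-k)$-dimensional polar Jacobian on each subspace, and then apply the projection--slice identity $\hat\varphi|_H=\widehat{P_H\varphi}$ with $P_H\varphi\ge 0$ --- is precisely the standard proof of this result, and it is correct, with the measure-theoretic caveats (absolute convergence of the pairing, validity of the disintegration for $L^1$ rather than continuous integrands, and local integrability of $f|_H$ for a.e.\ $H$) properly identified and plausibly dischargeable as you indicate.
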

This shows, for example, that the function
\begin{equation}
	\label{eq_goodexample}
	g_{n,p}(x) = |x|^{1-n} e^{-|x|^p},
\end{equation}
is a positive-definite distribution for $p \in (0,2)$.

In this paper we give some new criteria to determine if a given real function on $\R^n$ is a positive-definite distribution.
We will show that in fact this is the case of $g_{n,p}$ for every $p>0$.

\subsection{Main Results}

Regarding functions that are radially decreasing, we obtain:
\begin{theorem}
	\label{thm_decreasing_dimn}
	Let $f:(0,\infty) \to \R$ be a non-negative non-increasing function.
	Assume any of the following two conditions are satisfied:
	\begin{itemize}
		\item $n \geq 3$ and $f(r) \min\{1, r\}$ is integrable.
		\item $n \geq 9$ and $r f(r)$ is locally integrable.
	\end{itemize}
	Then the function $|x|^{-n+2} f(|x|)$ is a positive-definite distribution.
\end{theorem}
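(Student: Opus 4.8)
The plan is to build $|x|^{-n+2} f(|x|)$ as a non-negative superposition of truncated Riesz kernels and to invoke the principle, recalled above, that an integral of positive-definite distributions against a non-negative measure is again positive-definite. Concretely, since $f$ is non-negative and non-increasing it admits the layer-cake representation $f(r) = \int_0^\infty \chi_{[0,t)}(r)\, d\mu(t)$, where $\mu = -df$ is a non-negative Stieltjes measure on $(0,\infty)$; if $\lim_{r\to\infty} f(r) = L > 0$ I would first peel off the constant $L$, noting that $L|x|^{-n+2}$ is positive-definite because its Fourier transform is a positive multiple of $|\xi|^{-2}$. Multiplying through by $|x|^{-n+2}$ gives, at least formally,
\[ |x|^{-n+2} f(|x|) = \int_0^\infty |x|^{-n+2}\chi_{B_t}(x)\, d\mu(t), \]
where $B_t$ is the Euclidean ball of radius $t$. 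Since positive-definiteness is preserved under the dilation $x \mapsto x/t$, the whole matter reduces to a single computation: that the truncated Riesz kernel $g(x) = |x|^{-n+2}\chi_{B_1}(x)$ is a positive-definite distribution for $n \ge 3$.

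The crux is therefore to compute $\hat g$ and check its sign. As $g$ is radial and compactly supported, and locally integrable since $n-2 < n$, the transform $\hat g$ is given by a Hankel integral: with $\nu = (n-2)/2$ and $\rho = |\xi|$ one finds, after the substitution $s = r\rho$, that $\hat g(\xi)$ is a positive constant times $\rho^{-2}\int_0^\rho J_\nu(s)\, s^{1-\nu}\, ds$. Here I would use the Bessel identity $\frac{d}{ds}\left( s^{1-\nu} J_{\nu-1}(s) \right) = - s^{1-\nu} J_\nu(s)$ to integrate explicitly, obtaining $\int_0^\rho J_\nu(s)\, s^{1-\nu}\, ds = \frac{2^{1-\nu}}{\Gamma(\nu)} - \rho^{1-\nu} J_{\nu-1}(\rho)$. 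Recognising $\rho^{1-\nu} J_{\nu-1}(\rho)$ as a positive multiple of $\Omega_{n-2}(\rho)$, the Fourier transform of the uniform probability measure on $S^{n-3}$, this exhibits $\hat g(\xi)$ as a positive multiple of $|\xi|^{-2}\bigl(1 - \Omega_{n-2}(|\xi|)\bigr)$. Since $\Omega_{n-2}$ is the characteristic function of a probability measure we have $\Omega_{n-2} \le 1$, whence $\hat g \ge 0$ and $g$ is positive-definite; this uses exactly $n-2 \ge 1$, i.e. $n \ge 3$.

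The main obstacle is making the superposition rigorous at the level of tempered distributions, since $\mu$ may have infinite mass (when $f(0^+) = \infty$) and $f$ may fail to decay at infinity, so the mixing principle, phrased for a finite measure, does not apply directly. I would instead approximate $f$ from below by the truncations $f_{N,M}(r) = \min\{f(r), M\}\,\chi_{[0,N)}(r)$, each of which has a finite layer-cake measure and hence yields a positive-definite distribution by the two paragraphs above, and then pass to the limit using that positive-definiteness is closed under convergence in $\S'$. The hypotheses enter precisely in justifying that $|x|^{-n+2} f_{N,M} \to |x|^{-n+2} f$ in $\S'$: local integrability at the origin, guaranteed by $\int_0^1 r f(r)\, dr < \infty$ and contained in both hypotheses, ensures that $|x|^{-n+2} f(|x|)$ is a genuine locally integrable tempered distribution and supplies the dominating function for the limit; the behaviour at infinity is controlled in the first case by the integrability of $f$ there, whereas the second, non-decaying case is more delicate and is where I expect the passage to the limit to force the stronger dimensional restriction. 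Verifying the interchange of the Fourier transform with this limiting superposition, and thereby concluding that $\widehat{|x|^{-n+2} f}$ is a positive measure, is the step that will require the most care.
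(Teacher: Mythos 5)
Your proposal is essentially correct but follows a genuinely different route from the paper. The paper never computes the Fourier transform of the truncated Riesz kernel: it integrates $|x|^{-n+2}f(|x|)\hat\delta(x)$ in polar coordinates, expresses the spherical average of the Radon transform of $\delta$ as an integral against the profile $(1-(r/|x|)^2)_+^{(n-3)/2}$, and reduces everything to a one-dimensional positivity lemma (Lemma \ref{lem_decreasing_dim1}) applied with $\psi(r)=rf(r)$; the two hypotheses of the theorem are exactly the second and third admissibility conditions of that lemma (the $n\geq 9$ case being needed so that the profile is $C^3$). Your route instead decomposes $f$ by the layer-cake formula into indicators of balls and reduces to the single explicit computation that $\widehat{|x|^{-n+2}\chi_{B_1}}$ is a positive multiple of $|\xi|^{-2}(1-\Omega_{n-2}(|\xi|))\geq 0$; that Bessel computation is correct, and the dilation/superposition structure is sound. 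Two remarks. First, the limiting step you flag as delicate is in fact routine and does not produce the $n\geq 9$ restriction you anticipate: the measure $\mu=-df$ satisfies $\int_0^\infty\min\{t^2,1\}\,d\mu(t)<\infty$ as soon as $rf(r)$ is integrable near $0$ (integrate by parts, using $t^2f(t)\to 0$), and since $\int_{B_t}|x|^{-n+2}|\hat\delta(x)|\,dx\lesssim\min\{t^2,1\}$ for a test function $\delta$, Fubini applies directly without any truncation; the constant $f(\infty)|x|^{-n+2}$ is handled by the Riesz kernel. So your argument, once completed, actually proves the stronger statement that $n\geq 3$ and local integrability of $rf(r)$ suffice, subsuming both bullet points. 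Second, what the paper's less direct route buys is uniformity: the same Radon-transform reduction and the same one-dimensional lemma also drive Theorems \ref{thm_omega} and \ref{thm_convex_curiosity}, where the unit ball is a general convex body and no explicit Bessel formula is available, whereas your computation is specific to the Euclidean radial case.
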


Regarding functions of the form $f(\|x\|_K)$ we have:
\begin{theorem}
	\label{thm_omega}
	Let $f:(0,\infty) \to \R^+$ be an absolutely continuous non-increasing function such that $-t^n f'(t)$ is bounded and integrable, and such that $-t^{n-1} f'(t)$ is non-increasing.
	Then for every symmetric convex body $K$, the function $f(\|x\|_K)$ is a positive-definite distribution.
\end{theorem}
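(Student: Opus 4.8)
The plan is to show that the hypotheses force $F(x):=f(\|x\|_K)$ to be an integrable function, so that positive-definiteness reduces to the pointwise inequality $\hat F(\xi)\ge 0$, and then to strip away the dependence on $K$ by a slicing argument that lands on a one-dimensional sine-transform positivity. First I would check $F\in L^1(\R^n)$: writing $f(r)=\int_r^\infty(-f'(s))\,ds$ (the boundedness of $-s^nf'(s)$ gives $f(r)\le \frac{M}{n-1}r^{1-n}\to 0$, so $f(\infty)=0$), polar integration in the $K$-norm yields $\int_{\R^n}F = n|K|\int_0^\infty f(r)r^{n-1}\,dr = |K|\int_0^\infty(-s^nf'(s))\,ds<\infty$ by the integrability hypothesis. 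Since $F$ is real, even and integrable, it represents a positive-definite distribution if and only if its continuous Fourier transform satisfies $\hat F\ge 0$ everywhere.

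Next I would use the layer-cake identity $F(x)=\int_0^\infty(-f'(s))\chi_{sK}(x)\,ds$ together with Fubini (justified by $\int_0^\infty(-s^nf'(s))\,ds<\infty$) to obtain $\hat F(\xi)=\int_0^\infty(-f'(s))s^n\hat\chi_K(s\xi)\,ds$. Fixing $\xi=r\theta$ with $r=|\xi|$, the projection--slice identity $\hat\chi_K(t\theta)=\widehat{a_\theta}(t)$, where $a_\theta(\tau)=\vol[n-1]{K\cap\{(\cdot,\theta)=\tau\}}$ is the parallel-section function and $\widehat{a_\theta}$ its one-dimensional Fourier transform, turns this into a genuinely one-dimensional integral: after the substitution $t=sr$ and writing $g(s)=-s^{n-1}f'(s)$, one gets $\hat F(\xi)=\tfrac{1}{2r^2}\int_\R W_r(t)\,\widehat{a_\theta}(t)\,dt$ with the even function $W_r(t)=|t|\,g(|t|/r)$. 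The multiplication formula $\int \widehat{u}\,v=\int u\,\widehat{v}$ then gives $\hat F(\xi)=\tfrac{1}{2r^2}\int_\R \widehat{W_r}(\tau)\,a_\theta(\tau)\,d\tau$.

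The heart of the argument is to exploit the \emph{shape} of $a_\theta$ and not just its positivity. By Brunn's theorem $a_\theta(\tau)^{1/(n-1)}$ is concave, so $a_\theta$ is even and non-increasing in $|\tau|$, hence a nonnegative mixture of symmetric-interval indicators $\chi_{[-s,s]}$. It therefore suffices to prove $\int_{-s}^s\widehat{W_r}(\tau)\,d\tau\ge 0$ for every $s>0$, and a direct computation reduces this to $\int_0^\infty g(u)\sin(\sigma u)\,du\ge 0$ for all $\sigma>0$. This last inequality is the classical positivity of the sine transform of a non-increasing function (pair consecutive half-period humps and use monotonicity), and here $g=-t^{n-1}f'$ is nonnegative and non-increasing \emph{precisely} by hypothesis.

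I expect the main obstacle to be exactly this middle reduction. The naive hope that $W_r$ is itself positive-definite on $\R$ is false, since $W_r$ vanishes at the origin while being positive elsewhere; so the proof must genuinely use the unimodality of $a_\theta$ to trade ``testing against $a_\theta$'' for ``testing against symmetric intervals,'' which is the bridge to the decreasing-sine-transform inequality. The remaining work is analytic bookkeeping: verifying $W_r\in L^1(\R)$ (this is where boundedness of $-t^nf'$ is used, beyond integrability) so that $\widehat{W_r}$ is a bona fide continuous function and the Fubini and multiplication-formula steps are legitimate. As a consistency check, for $n=1$ the hypotheses say $f$ is convex and decreasing with $f(\infty)=0$, and the conclusion is P\'olya's criterion.
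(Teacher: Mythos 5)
Your proposal is correct and follows essentially the same route as the paper: decompose $f(\|x\|_K)$ via layer-cake into dilates of $K$ using $\omega(t)=-t^nf'(t)$, pass to one dimension through the Fourier--Radon (projection--slice) relation and the parallel section function, invoke Brunn's theorem to write that function as a mixture of symmetric intervals, and reduce to the positivity of the sine transform of the non-increasing function $-t^{n-1}f'(t)$. The only packaging difference is that you observe $f(\|x\|_K)\in L^1(\R^n)$ and argue with pointwise Fourier transforms, inlining the one-dimensional positivity step that the paper isolates as Lemma \ref{lem_decreasing_dim1} and applies in the distributional (test-function) framework.
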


For $n=1, K=[-1,1]$, the conditions on $f$ are simply that $f$ should be convex (plus some decay condition), recovering in this way the P\'olya criterion.

Finally, we would like to highlight an interesting result regarding the positivity of certain inner products, which was obtained as a by-product of our methods.
\begin{theorem}
	\label{thm_convex_curiosity}
	Let $\varphi, \psi:\R^n \to \R$ be even integrable functions such that $\varphi$ as convex sub-level sets and $\psi$ is radially decreasing.
	Then
	\[\int_{\R^n} |x|^\alpha \varphi(x) \hat \psi(x) dx \geq 0\]
	for every $\alpha \in (-n, 2-n]$.
	In particular if $n=2$, 
	\[\int_{\R^2} \varphi(x) \hat \psi(x) dx \geq 0.\]
\end{theorem}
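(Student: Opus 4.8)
The plan is to peel off the two hypotheses one at a time by superposition, reducing everything to a single scalar inequality for an incomplete Bessel integral.

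First I would record that a non-increasing integrable radial profile is automatically non-negative, so $\psi\ge0$. Writing $B_R$ for the centered Euclidean ball of radius $R$, the layer-cake formula gives $\psi=\int_0^\infty \chi_{B_{\rho}}\,d\mu(\rho)$ with $d\mu\ge0$, since the super-level sets of the radially decreasing $\psi$ are balls. By linearity of the Fourier transform, $\hat\psi=\int_0^\infty \widehat{\chi_{B_\rho}}\,d\mu(\rho)$, and Fubini turns the target into $\int_0^\infty\bigl(\int_{\R^n}|x|^\alpha\varphi(x)\widehat{\chi_{B_\rho}}(x)\,dx\bigr)d\mu(\rho)$; a dilation $x\mapsto x/\rho$ (which sends $\varphi$ to another even function with the same level-set structure) normalizes $\rho=1$ at the cost of a positive constant. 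Likewise, since $\varphi$ has convex --- hence star-shaped --- symmetric super-level sets, the layer cake $\varphi=\int_0^\infty\chi_{K_t}\,dt$ expresses it as a non-negative superposition of indicators of symmetric star bodies $K_t=\{\varphi>t\}$. Thus it suffices to prove, for every symmetric star body $K$, that $\int_K |x|^\alpha\,\widehat{\chi_{B_1}}(x)\,dx\ge0$.

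The integrand $|x|^\alpha\widehat{\chi_{B_1}}(x)=:g(|x|)$ is radial, so in polar coordinates $\int_K g(|x|)\,dx=\int_{S^{n-1}}G(\rho_K(\theta))\,d\theta$, where $\rho_K$ is the radial function of $K$ and $G(R)=\int_0^R g(r)\,r^{n-1}\,dr=\int_{B_R}|x|^\alpha\widehat{\chi_{B_1}}(x)\,dx$ up to the surface-area constant. Hence the whole theorem collapses to the single-variable claim that $G(R)\ge0$ for every $R>0$. It is worth noting that convexity of the level sets of $\varphi$ enters only through star-shapedness: any even $\varphi$ whose super-level sets are star-shaped about the origin would do.

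It remains to prove the crux. Using $\widehat{\chi_{B_1}}(\xi)=(2\pi)^{n/2}|\xi|^{-n/2}J_{n/2}(|\xi|)$ one gets, with a positive constant $C$, $G(R)=C\int_0^R r^{\alpha+n/2-1}J_{n/2}(r)\,dr$, and the assertion is that this is non-negative precisely for $\alpha\in(-n,2-n]$. I expect this to be the main obstacle. The endpoint $\alpha=2-n$ is decisive: there the exponent equals $1-n/2$, and the restriction $\alpha\le2-n$ is exactly what makes $|x|^{\alpha+n-2}\chi_{B_R}$ non-increasing, so that $|x|^\alpha\chi_{B_R}=|x|^{2-n}\cdot\bigl(|x|^{\alpha+n-2}\chi_{B_R}\bigr)$ has the ``non-increasing profile times $|x|^{2-n}$'' shape governed by the positive-definiteness mechanism of Theorem \ref{thm_decreasing_dimn}. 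Indeed, by Parseval $G(R)\ge0$ is equivalent to $\int_{B_1}\widehat{|x|^\alpha\chi_{B_R}}\ge0$, which holds as soon as $|x|^\alpha\chi_{B_R}$ is a positive-definite distribution. I would therefore establish $G(R)\ge0$ either by invoking that positive-definiteness --- the same Fourier-analytic estimate that drives the paper's main theorems --- or directly, via the Bessel recurrence $\frac{d}{dr}[r^{-\nu}J_\nu]=-r^{-\nu}J_{\nu+1}$ together with the interlacing of the zeros of $J_\nu$ and $J_{\nu+1}$, the monotone weight $r^{\alpha+n/2-1}$ controlling the alternating contributions between successive zeros.

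Finally, the special case $n=2$, $\alpha=0$ is both the cleanest instance and a useful sanity check: there $G(R)=C\int_0^R J_1(r)\,dr=C\,(1-J_0(R))\ge0$ because $J_0\le1$, which yields $\int_{\R^2}\varphi\,\hat\psi\ge0$ directly. Assembling the reductions above with this endpoint computation completes the argument.
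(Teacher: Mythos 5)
Your reduction is sound and genuinely different from the paper's. The paper also layer-cakes both functions down to indicators, but then keeps the super-level set of $\psi$ as a general convex body $K$, writes $\hat\psi(tv)$ in terms of $\widehat{A_{K,v}}(t)$, invokes Brunn's concavity principle for the monotonicity of the parallel section function, and finishes with the one-dimensional Lemma \ref{lem_decreasing_dim1}; your version exploits that $\psi$ is genuinely radial, so that the whole problem collapses to the scalar profile $G(R)=C\int_0^R r^{\alpha+n/2-1}J_{n/2}(r)\,dr$, and you correctly observe that only star-shapedness of the level sets of $\varphi$ is used (your reading of ``sub-level sets'' as super-level sets is the intended one, and the reduction to $G(R)\ge0$ is tight since $K=B_R$ is itself an admissible star body). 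The paper's route is less explicit but proves more, since it never uses that $\hat\psi$ is radial.

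The gap is that the crux $G(R)\ge0$ is not actually proved on the full range. Your route through Theorem \ref{thm_decreasing_dimn} does close the case $n\ge3$: take $f(r)=r^{\alpha+n-2}\chi_{[0,R]}(r)$, which is non-negative, non-increasing and has $f(r)\min\{1,r\}$ integrable exactly for $\alpha\in(-n,2-n]$, and then pass from $\hat f\ge0$ to $G(R)\ge0$ via the $L^1$ multiplication formula $\int f\,\widehat{\chi_{B_1}}=\int\hat f\,\chi_{B_1}$ (needed because $\chi_{B_1}$ is not Schwartz). But that theorem requires $n\ge3$, and your direct computation only covers $n=2$, $\alpha=0$; the case $n=2$, $\alpha\in(-2,0)$ --- squarely inside the advertised range --- is left open, and the ``interlacing of zeros'' argument is only gestured at. To close it uniformly in $n$: at the endpoint $\alpha=2-n$ the recurrence $\frac{d}{dr}\left[r^{-\nu}J_\nu(r)\right]=-r^{-\nu}J_{\nu+1}(r)$ with $\nu=n/2-1$ gives the exact value $G(R)=C\left(\tfrac{2^{1-n/2}}{\Gamma(n/2)}-R^{1-n/2}J_{n/2-1}(R)\right)\ge0$ by the bound $|J_\nu(t)|\le(t/2)^\nu/\Gamma(\nu+1)$; for $\alpha<2-n$ write $r^{\alpha+n/2-1}=r^{\alpha+n-2}\cdot r^{1-n/2}$ and integrate by parts against the non-negative, non-increasing weight $r^{\alpha+n-2}$, whose boundary term at $0$ vanishes precisely because $\alpha>-n$. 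This is the same monotone-weight mechanism that drives the paper's Lemma \ref{lem_decreasing_dim1}, transplanted to Bessel functions; with that paragraph added your argument is complete.
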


\subsection{Examples}

As mentioned before, Theorem \ref{thm_subspace} with $k=n-1$ shows that the function $g_{n,p}$ given in \eqref{eq_goodexample} is positive-definite for $p \in (0,2)$.
Theorem \ref{thm_decreasing_dimn} shows actually that $g_{n,p}$ is positive-definite in the full range $p > 0$, for $n \geq 3$.
For $n=2, p>0$ this is still true, but a completely different proof is needed and we will skip this case for simplicity.

The Fourier transform of $g_{n,p}$ is given in \cite[Example 4.7]{koldobsky2023comparison} as an example of a continuous function which is not an intersection function.
Theorem \ref{thm_decreasing_dimn} shows that $\hat g_{n,p}$ is also non-negative thus filling this small gap in the example.

Theorem \ref{thm_decreasing_dimn} also shows that Theorem \ref{thm_subspace} does not admit a converse, at least for $k = n-1, n\geq 3$: there exist functions $f:\R \to \R$ which are not positive-definite, such that $|x|^{-n+1} f(|x|)$ is positive-definite. Just take $f(t) = e^{-|t|^p}$ for any $p > 2$.

Now we give some examples related to Theorem \ref{thm_omega}, although they are admittedly artificial.
For any symmetric convex body $K$ and $a>0, \alpha \in (1-n, 2-n)$, the function
	\[
	F(x) = 
		\begin{cases}
			\|x\|_K^\alpha & \|x\|_K \leq a \\
			0 & \|x\|_K \geq a 
		\end{cases},
	\]
	is a positive-definite distribution.

For any symmetric convex body $K$, the function $F(x) = \|x\|_K^p$ is a positive-definite distribution for $p \in (-n, 1-n)$.
This is a very particular case of \cite[Theorem 3.18]{koldobsky2005fourier}, in which the conclusion holds for the range $p \in (-n, 3-n)$.

\section{Proofs of the main theorems}
The next lemma summarizes many situations where an inner product is non-negative.
\begin{lemma}
	\label{lem_decreasing_dim1}
	Let $\varphi, \psi:\R \setminus \{0\} \to \R$ be two non-negative, even, measurable functions, such that $\varphi(x)$ and $|x|^{-1} \psi(x)$ are non-increasing for $x>0$.
	Assume any of the three following conditions are met:
	\begin{itemize}
		\item $\varphi, \psi \in L^p(\R)$ with $p \in [1,2]$.
		\item $\varphi$ is bounded with compact support and $\psi(x) \min\{1, |x|^{-1}\}$ is integrable.
		\item $ \varphi \in L^1(\R) \cap C^3(\R)$ and $\psi \in L^1_{loc}(\R)$.
	\end{itemize}
	Then 
	\begin{equation}
		\label{eq_positive_product}
		\int_{-\infty}^{\infty} \hat \varphi(x) \psi(x) dx = \int_{-\infty}^{\infty} \varphi(x) \hat \psi(x) dx \geq 0.
	\end{equation}

\end{lemma}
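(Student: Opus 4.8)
The proof rests on two ingredients. The first is the elementary multiplication formula $\int \hat\varphi\, \psi = \int \varphi\, \hat\psi$, which gives the equality in \eqref{eq_positive_product}. The second is the classical positivity of the sine transform of a monotone function, which will yield the inequality. The strategy is to exploit the monotonicity of $\varphi$ to write it as a non-negative superposition of indicators of symmetric intervals, reducing the whole integral to a one-parameter family of such sine transforms of $\psi$.

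Concretely, since $\varphi$ is non-negative, even, non-increasing on $(0,\infty)$ and (in each of the three cases) vanishes at infinity, it admits the layer-cake representation $\varphi(x) = \int_0^\infty \chi_{[-a,a]}(x)\, d\mu(a)$, where $\mu$ is the non-negative measure $-d\varphi$ on $(0,\infty)$. Interchanging this integration with the integration in $x$ (the delicate point, discussed below) gives
\[
\int_{-\infty}^\infty \hat\varphi(x)\,\psi(x)\,dx = \int_0^\infty \left( \int_{-\infty}^\infty \widehat{\chi_{[-a,a]}}(x)\, \psi(x)\, dx \right) d\mu(a).
\]
Since $\widehat{\chi_{[-a,a]}}(x) = \tfrac{2\sin(ax)}{x}$ and both this kernel and $\psi$ are even, the inner integral equals $4\int_0^\infty \tfrac{\sin(ax)}{x}\psi(x)\,dx = 4\int_0^\infty \sin(ax)\, g(x)\,dx$, where $g(x) = |x|^{-1}\psi(x)$ is non-negative and non-increasing by hypothesis. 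As $\mu \geq 0$, it suffices to prove that $\int_0^\infty \sin(ax)\, g(x)\,dx \geq 0$ for every $a>0$.

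This is the standard positivity of the sine transform of a monotone function. Splitting the half-line into the intervals $I_k = [k\pi/a,(k+1)\pi/a]$ and pairing each interval on which $\sin(ax)\geq 0$ with the next, the substitution $x \mapsto x+\pi/a$ turns each consecutive pair into $\int_{I_{2m}} \big(g(x)-g(x+\pi/a)\big)\sin(ax)\,dx$, which is non-negative because $g$ is non-increasing and $\sin(ax)\geq 0$ on $I_{2m}$; summing over $m$ gives the claim. Near the origin the integral converges since $|\sin(ax)|g(x)\leq a\,\psi(x)$ is locally integrable. At infinity there are two regimes: in the first two cases $g$ decreases to $0$ (otherwise $\psi$ would grow linearly, contradicting the integrability hypotheses), so the alternating series converges by the Dirichlet test; in the third case $g$ may tend to a positive limit and the integral must be read in the regularized sense $\lim_{\varepsilon\to 0^+}\int_0^\infty e^{-\varepsilon x}\sin(ax)g(x)\,dx$. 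The pairing argument is robust under this regularization, since $e^{-\varepsilon x}g(x)$ is again non-negative and non-increasing, so each regularized integral is $\geq 0$ and the limit retains the sign.

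The main obstacle, and what forces the three separate hypotheses, is the rigorous justification of the multiplication formula and of the interchange above under minimal integrability; because the inner sine transforms are in general only conditionally (or distributionally) convergent, Fubini does not apply directly, and one must approximate $\varphi$ by finite non-negative combinations of indicators $\chi_{[-a,a]}$ and pass to the limit. Each hypothesis supplies exactly the control needed. In the first case $\varphi,\psi\in L^p$ together with Hausdorff--Young place $\hat\varphi\in L^{p'}$, so $\int\hat\varphi\,\psi$ converges absolutely and the multiplication formula follows by density. In the second case the compact support of $\varphi$ and the integrability of $\psi(x)\min\{1,|x|^{-1}\}$ (that is, $g\in L^1$) give absolute convergence throughout, so the approximation is handled by dominated convergence. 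In the third case $\varphi\in C^3\cap L^1$ makes $\hat\varphi$ decay like $|x|^{-3}$, which dominates the at-most-linear growth of $\psi$ permitted by the monotonicity of $g$; hence $\int\hat\varphi\,\psi$ converges absolutely, the identity $\int\varphi\,\hat\psi = \int\psi\,\hat\varphi$ holds in the sense of tempered distributions, and the regularized reading of the sine integral matches the distributional pairing, closing the argument.
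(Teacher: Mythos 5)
Your proposal is correct in substance and reaches the same underlying positivity mechanism as the paper, but packages it differently. The paper performs a \emph{double} layer-cake decomposition: it writes $\varphi$ as a superposition of indicators $\chi_{[-a,a]}$ \emph{and} $|x|^{-1}\psi(x)$ as a superposition of indicators $\chi_{[-b,b]}$, reducing everything to the fully explicit computation $\int \widehat{\chi_{[-a,a]}}(x)\,|x|\chi_{[-b,b]}(x)\,dx = \tfrac{4}{a}(1-\cos(ab))\ge 0$; the three hypotheses are then handled by truncation plus dominated convergence (case 2 via an explicit bound $|\hat\varphi(x)|\le C\min\{1,|x|^{-1}\}$, case 3 by cutting $\psi_n=\psi\chi_{[-n,n]}$ and reducing to the $L^1$ case). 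You instead decompose only $\varphi$ and invoke the classical positivity of $\int_0^\infty\sin(ax)g(x)\,dx$ for non-increasing $g=|x|^{-1}\psi$, proved by pairing consecutive half-periods; this is an equivalent and arguably more transparent formulation of the same fact, and your identification of where each of the three hypotheses enters (Hausdorff--Young and bilinear continuity for $L^p$; absolute convergence from $g\in L^1$ for case 2; decay of $\hat\varphi$ against the at-most-linear growth of $\psi$ for case 3) matches the paper's logic.

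The one place your write-up is too loose is the end of case 3: you prove that each Abel-regularized sine integral is non-negative, but the assertion that ``the regularized reading matches the distributional pairing'' is not an argument, and for finite combinations of indicators the unregularized inner integrals genuinely diverge when $g$ has a positive limit, so the naive interchange you set up earlier is not available here. The gap is easily closed with your own tools: for $\varepsilon>0$ the function $e^{-\varepsilon|x|}\psi(x)$ is integrable (since $\psi(x)\le g(1)|x|$ for $|x|\ge 1$) and $|x|^{-1}e^{-\varepsilon|x|}\psi(x)$ is still non-increasing, so the already-established $L^1$ case gives $\int\hat\varphi(x)e^{-\varepsilon|x|}\psi(x)\,dx\ge 0$, and dominated convergence with the integrable majorant $|\hat\varphi\,\psi|$ sends $\varepsilon\to 0$. (Alternatively, truncate $\psi$ as the paper does.) You might also note that the decay $\hat\varphi(x)=O(|x|^{-3})$ requires integrability of the derivatives of $\varphi$, not merely $\varphi\in C^3\cap L^1$ --- though the paper's own proof makes the same leap, and it is harmless in the intended application where $\varphi$ has compact support.
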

Observe also that if $\psi$ is non-increasing, then clearly $|x|^{-1} \psi(x)$ is also non-increasing for $x>0$.
\begin{proof}
	Lets assume first that $\varphi(x) = \chi_{[-a,a]}(x)$ and $\psi(x) = |x| \chi_{[-b,b]}(x)$
	where $a, b \in [0,\infty)$.
	We have $\hat \varphi(\xi) = \frac {2 \sin(a \xi)}\xi$.
	\begin{align}
		\int_{-\infty}^{\infty} \hat \varphi(x) \psi(x) d x 
		&= 2 \int_{-\infty}^{\infty} \frac {\sin(a \xi)}\xi |\xi| \chi_{[-b,b]}(\xi) d\xi \\
		&= 4 \int_0^b \sin(a \xi) d\xi \\
		&= \frac 4a (1- \cos(a b)),
	\end{align}
	which is non-negative.

	Now assume $\psi(x) = |x| \delta(x)$ where $\delta$ is non-increasing, and $\delta, \varphi$ are even, bounded with compact support.
	Use the layer-cake formula to write
	\[\varphi(x) = \int_0^\infty \chi_{[-a(t),a(t)]}(x) dt,\quad \psi(x) = |x| \int_0^\infty \chi_{[-b(t), b(t)]}(x) dt,\]
	for some positive and non-increasing functions $a,b:[0,\infty) \to [0,\infty)$ which are also bounded and with compact support.

	By Fubini we get
	\[\int_{-\infty}^{\infty} \hat \varphi(x) \psi(x) dx = \int_0^\infty \int_0^\infty \int_{-\infty}^{\infty} 2 \frac{\sin(a(t)x)}{x} |x| \chi_{[-b(s), b(s)]}(x) dx dt ds\]
	which is non-negative.
	Notice that the conditions imposed to $\delta$ and $\varphi$ guarantee that we may change the order of integration.

	Now let us prove \eqref{eq_positive_product} under the first condition, namely, that $\varphi, \psi \in L^p$. Approximate $\varphi,\psi$ using
	\[\varphi_n(x) = \min\{n, \varphi(x)\} \chi_{[-n,n]}(x), \quad \psi_n(x) = \min\{n x, \psi(x)\} \chi_{[-n,n]}(x).\]
	Notice that $\psi_n(x) = \delta_n(x) |x|$ where $\delta_n$ is even, non-increasing, bounded and has compact support.
	By dominated convergence, $\varphi_n \to \varphi$, $\psi_n \to \psi$ in $L^p$.

	We have 
	\[\int_{-\infty}^\infty \hat \varphi_n(x) \psi_n(x) dx \geq 0.\]
	But since the left-hand side is a bounded bilinear form in $L^p \times L^p$, we conclude that
	\[\int_{-\infty}^\infty \hat \varphi(x) \psi(x) dx \geq 0,\]
	and the proof of the first part is complete.

	Now assume $\varphi, \psi$ satisfy the second condition, this is, $\varphi$ is bounded with compact support, and $\delta(x) = |x|^{-1} \psi(x)$ is integrable.
	The assumptions on $\varphi$ guarantee that $\hat\varphi(x) = \int_0^{\max \varphi} 2 \frac {\sin(a(t) x)}x dt$ from which we deduce that $|\hat\varphi(x)| \leq C \min\{1, |x|^{-1}\}$ for some constant $C$. By the assumption on $\psi$, the product $\hat\varphi \psi$ is integrable. We compute
	\begin{align}
		\int_{-\infty}^\infty \hat\varphi(x) \psi(x) d x 
		&= \int_{-\infty}^\infty \int_0^{\max \varphi} 2 \frac {\sin(a(t) x)}x |x| \delta(x) d t d x\\
		&= \int_0^\infty \int_0^{\max \varphi} 4 a(t) \frac {\sin(a(t) x)}{a(t) x} x \delta(x) d t d x,
	\end{align}
	and we see that the integrand is bounded in absolute value by $4 (\max a) \chi_{[0,\max \varphi]}(t) x \delta(x)$ for $x \in [0,1]$ and by $\chi_{[0,\max \varphi]}(t) \delta(x)$ for $x > 1$. This is, it is bounded by an integrable function in $\R \times \R$.
	This allows us to use the Fubini theorem, and conclude exactly as in the first part of the proof.

	For the last condition, approximate $\psi$ by $\psi_n = \psi \chi_{[-n,n]}$. Since $\varphi,\psi_n \in L^1$,
	\[\int_{-\infty}^\infty \hat \varphi(x) \psi(x) dx \geq 0.\]
	The hypothesis on $\varphi$ implies that $x \hat \varphi(x) \in L^1(\R)$, then 
	\[|\hat \varphi(x) \psi(x)| \leq \begin{cases} \|\varphi\|_1 \psi(x) & x \in [-1,1] \\ |\hat \varphi(x) x| \psi(1) & x \not\in [-1,1] \end{cases},\]
which is integrable. We conclude again by dominated convergence.

\end{proof}

The Radon transform of a Schwartz function $\varphi$ is the function $\mathcal R \varphi: S^{n-1} \times \R \to \R$ given by
\[ \mathcal R \varphi(v,t) = \int_{v^\perp + t v} \varphi.\]
Integration occurs in the hyperplane orthogonal to $v$ passing through the point $t v$, with respect to the $(n-1)$-dimensional Lebesgue measure.
It is easy to prove that the Fourier transform of $t \mapsto \mathcal R \varphi(v,t)$ is the function $t \mapsto \hat\varphi(t v)$.

If $K \subseteq \R$ is a symmetric convex body, the indicator function of $K$ is
\[1_K(x) = \begin{cases} 1 & \text{ if } x \in K \\ 0 & \text{ otherwise } \end{cases}.\]
	Its Radon transform is called the parallel section function
\[A_{K,v}(t) = \mathcal R 1_K(v,t).\]
By the Brunn's concavity principle, $A_{K,v}(t)$ is a decreasing function of $t$, for $t>0$.

\begin{proof}[Proof of Theorem \ref{thm_decreasing_dimn}]
	First let us consider the case where $f(r) \min\{1, r\}$ is an integrable function.
	Extend $f:\R \setminus\{0\} \to \R$ as an even function and consider an even non-negative test function $\delta$. By polar coordinates,
\begin{align}
	\int_{\R^n} |x|^{-n+2} f(|x|) \hat \delta(x) d x
	&= \int_{S^{n-1}} \int_0^\infty r f(r) \hat \delta(r v) d r d v \\
	&= \frac 12 \int_{-\infty}^\infty r f(r) \int_{S^{n-1}} \hat \delta(r v) d v d r \\
	&= \frac 12 \int_{-\infty}^\infty r f(r) \mathcal F\left[ \int_{S^{n-1}}\mathcal R \delta (v,\cdot) dv \right](r) d r.
\end{align}
The integral of the Radon transform can be written as
	\begin{equation}
		\label{eq_integral_radon}
		\int_{S^{n-1}}\mathcal R \delta (v,r) dv = \kappa_{n-2} \int_{\R^n} \delta(x) ( 1 - (r/|x|)^2 )_+^{\frac{n-3}2} |x|^{-1} dx,
	\end{equation}
	where $\kappa_d$ is the $(d-1)$-volume of $S^{d-1}$.
	To see this, write the Radon transform as $\mathcal R \delta (v,r) = \frac{\partial}{\partial r} \int_{\R^n} \delta(x) \chi_{(x, v) \leq r} dx$, then
\begin{align}
	\int_{S^{n-1}}\mathcal R \delta (v,r) d v
	&= \int_{\R^n} \delta(x) \frac{\partial}{\partial r} \int_{S^{n-1}} \chi_{(x, v) \leq r} d v d x,
\end{align}
which by rotational invariance can be computed as
\begin{align}
	\int_{S^{n-1}} \chi_{(x, v) \leq r} d v
		&= \int_{S^{n-1}} \chi_{ (e_1, v) \leq r/|x|} d v \\
		&= \kappa_{n-2} \int_{-1}^{r/|x|} (1-s^2)^{\frac{n-3}2} d s
\end{align}
if $r \leq |x|$, and $\kappa_{n-1}$ otherwise.
Take derivative with respect to $r$ to obtain \eqref{eq_integral_radon}.

Finally we get
	\[\int_{\R^n} |x|^{-n+2} f(|x|) \hat \delta(x) dx = \kappa_{n-2} \int_{\R^n} \frac {\delta(x)}{|x|} \frac 12 \int_{-\infty}^\infty r f(r) \mathcal F[ ( 1 - (r/|x|)^2 )_+^{\frac{n-3}2}] dr dx.\]
	For fixed $x \in \R^n \setminus \{0\}$, the function $\varphi_x(r) = ( 1 - (r/|x|)^2 )_+^{\frac{n-3}2}$ is bounded with compact support, even and non-increasing for $n\geq 3$, and the theorem follows from Lemma \ref{lem_decreasing_dim1} with $\psi(r) = r f(r)$ under the second condition.

	For the second part, notice that for $n \geq 9$, we have $\varphi_x \in C^3(\R)$ and we can use the second part of Lemma \ref{lem_decreasing_dim1}.
\end{proof}

\begin{proof}[Proof of Theorem \ref{thm_omega}]
	Since $f$ is absolutely continuous, it can be written as
	\[f(s) = \int_s^\infty \omega(\lambda) \lambda^{-n} d\lambda\]
for $s>0$, where $\omega(t) = - t^n f'(t)$.
	By hypothesis $\omega$ is non-negative, bounded and integrable.

The function
\[f(\|x\|_K) = \int_{\|x\|_K}^\infty \omega(\lambda) \lambda^{-n} d\lambda = \int_0^\infty \chi_{\lambda K}(x) \omega(\lambda) \lambda^{-n} d\lambda \]
	is locally integrable since $f(\|x\|_K) \leq (\max \omega) \|x\|_K^{1-n}$, and defines a tempered measure.

Take any non-negative Schwartz function $\varphi$.
	The previous bound on $f$ allows us to apply Tonelli's theorem with $|\hat\varphi(x)| \omega(\lambda) \lambda^{-n} \geq 0$, and conclude that $\hat\varphi(x) \omega(\lambda) \lambda^{-n}$ is integrable in $(x, \lambda) \in \R^n \times [0, \infty)$. By Fubini's theorem,
\begin{align}
	\int_{\R^n} f(\|x\|_K) \hat\varphi(x) d x
	&= \int_{\R^n} \int_0^\infty \chi_{\lambda K}(x) \hat\varphi(x) \omega(\lambda) \lambda^{-n} d\lambda d x \\
	&= \int_0^\infty \int_{\R^n} \chi_{\lambda K}(x) \hat\varphi(x) d x \omega(\lambda) \lambda^{-n} d\lambda \\
	&= \int_0^\infty \int_{\R^n} \widehat{\chi_{\lambda K}}(x) \varphi(x) d x \omega(\lambda) \lambda^{-n} d\lambda \\
	&= \int_0^\infty \int_{\R^n} [\chi_{K}(\lambda^{-1} \cdot)]^\wedge(x) \varphi(x) d x \omega(\lambda) \lambda^{-n} d\lambda \\
	&= \int_0^\infty \int_{\R^n} \widehat{\chi_{K}}(\lambda x) \varphi(x) d x \omega(\lambda) d\lambda.
\end{align}
	The integrand $\hat \chi_K(\lambda x) \varphi(x) \omega(\lambda)$ is clearly 
	integrable in $(x, \lambda) \in \R^n \times [0, \infty)$, so we can apply again Fubini's theorem again.
	Use the relation between Fourier and Radon transforms to get
\begin{align}
	\int_{\R^n} f(\|x\|_K) \hat\varphi(x) d x
	&= \frac 12 \int_{\R^n} \int_{-\infty}^\infty \widehat{A_{K, x/|x|}}(\lambda |x|) \omega(|\lambda|) d\lambda \varphi(x) d x\\
	&= \frac 12 \int_{\R^n}  \frac{\varphi(x)}{|x|} \int_{-\infty}^\infty  \widehat{A_{K, x/|x|}}(\lambda) \omega(|\lambda|/|x|) d\lambda d x.
\end{align}
	The function $A_{K, \xi}(\lambda)$ is even, bounded with compact support, and decreasing for $\lambda >0$.
	Also $\omega$ is integrable and $\omega(t) t^{-1} = -t^{n-1} f'(t)$ which by hypothesis is non-increasing, and we may apply Lemma \ref{lem_decreasing_dim1} to conclude that $\hat f$ is a non-negative distribution.

\end{proof}

\begin{proof}[Proof of Theorem \ref{thm_convex_curiosity}]
	First lets assume that $\varphi(x) = \chi_L, \psi(x) = \chi_K$ where $K$ is a convex body and $L$ is a radial body.
	By polar coordinates,
	\begin{align}
		\int_{\R^n} |x|^\alpha \varphi(x) \hat \psi(x) d x
		&= \int_L |x|^\alpha \hat \chi_K(x) d x \\
		&= \int_{S^{n-1}}\int_0^{\varrho_L(v)} r^{n-1+\alpha} \hat \chi_K(r v) d x \\
		&= \frac 12 \int_{S^{n-1}}\int_{-\infty}^\infty |t|^{n-1+\alpha} \chi_{[-\varrho_L(v),\varrho_L(v)]}(t) \hat A_{K,v}(t) d x,
	\end{align}
	where $\varrho_L(x) = \|x\|_K^{-1}$ is the radial function of $L$.
	Since $|t|^{n-1+\alpha} \chi_{[-\varrho_L(v),\varrho_L(v)]}(t)$ is integrable
	and $|t|^{-1} |t|^{n-1+\alpha} \chi_{[-\varrho_L(v),\varrho_L(v)]}(t) = |t|^{n-2+\alpha} \chi_{[-\varrho_L(v),\varrho_L(v)]}(t)$ is non-increasing for $t>0$,  we may apply Lemma \ref{lem_decreasing_dim1}.

	For the general case, we proceed as in Lemma \ref{lem_decreasing_dim1} using the layer-cake formula.
\end{proof}

\subsection*{Acknowledgments}
The author was supported by Grant RYC2021 - 031572 - I, funded by the Ministry of Science and Innovation / State Research Agency / 10.13039 / 501100011033 and by the E.U. Next Generation EU/Recovery, Transformation and Resilience Plan, and by Grant PID2022 - 136320NB - I00 funded by the Ministry of Science and Innovation.

The author wishes to thank A. Koldobsky for the very useful discussions that led to this article.

\bibliographystyle{abbrv}
\bibliography{../references}

\end{document}